\newtheorem{theorem}{Theorem}[section]
\theoremstyle{definition}
\newtheorem{corollary}[theorem]{Corollary}
\theoremstyle{remark}
\newtheorem{remark}[theorem]{Remark}
\numberwithin{equation}{section}
\begin{document}

\title[Generalized Parabolic Frequency on compact  manifolds]{ Generalized Parabolic Frequency on compact  manifolds}

\author{Shahroud Azami}
\address{Department of  Pure Mathematics, Faculty of Science, Imam Khomeini International University,
Qazvin, Iran. }

\email{azami@sci.ikiu.ac.ir}

\author{Abimbola Abolarinwa}
\address{Department of Mathematics, University
of Lagos, Akoka, Lagos State, Nigeria}
\email{a.abolarinwa1@gmail.com}

\subjclass[2020]{58C40, 53E20, 53C21}



\keywords{Parabolic frequency,  Monotonicity, Heat equation}
\begin{abstract}
In this paper,  we first prove monotonicity of a generalized parabolic frequency on weighted closed Riemannian manifolds for some linear heat equation. Secondly, a certain generalized parabolic frequency functional is defined with respect to the solutions of a nonlinear weighted $p$-heat-type equation on manifolds, and its monotonicity is proved. Notably, the monotonicities are derived with no assumption on both the curvature and the potential function. Further consequences of these monotonicity formulas from which we can get backward uniqueness are discussed.
\end{abstract}

\maketitle

\tableofcontents

\section{Introduction}

The following frequency functional was  first introduced by Almgren in \cite{A} 
\begin{align}\label{01}
N(r)=\frac{r\int_{B(p,r)}|\nabla v|^{2}dx}{\int_{\partial B(p,r)}v^{2}dA}
\end{align}
for a harmonic function $v$ on Euclidean space $\mathbb{R}^{n}$. In this quantity $p$ is  a fixed  point in $\mathbb{R}^{n}$,   $B(p,r)$ is a ball  of radius $r>0$ centred at $p$,   while  $\partial B(p,r)$ and $dA$ are  the boundary of $B(p,r)$  and the induced $(n-1)$-dimensional Hausdorff measure on $\partial B(p,r)$, respectively.   This functional in (\ref{01}) has played a crucial role in the analysis of linear and nonlinear elliptic partial differential equations,  starting with its monotonicity properties proved in \cite{A}.  Properties such as local regularity of harmonic functions,  unique continuation of  elliptic operators, and estimates of nodal sets of solutions, to name but a few have been proved with the aid of (\ref{01}).  Many of these results have been extended to various settings most especially Riemannian manfold setting. For detail discussion on applications of this functional and its various extensions,  and the generalization of these results interested readers  can see the following literature \cite{G1, G2,L,TH1,XL} and the references cited therein.
The counterpart of $N(r)$ for  the solution to the heat equation  on $\mathbb{R}^{n}$  is called parabolic frequency  functional, and  was  first introduced by Poon \cite{P} who used  it to study  the unique  continuation   of solutions  to  parabolic equations on  $\mathbb{R}^{n}$.   Later  Ni \cite{N} proved monotonicity of such parabolic frequency functional on Riemannian (or K\"ahler) manifold with nonnegative sectional  (or bisectional) curvature and parallel Ricci curvature for holomorphic function using Hamilton's matrix Harnack estimate.  

For a given  Riemannain manifold $(M^{n},g)$ with a potential function $f: M\to \mathbb{R}$ there is an associated second order diffusion operator called drifting or Witten Laplacian defined as follows
\begin{align}\label{02}
\Delta_{f}u={\rm div}_{f}(\nabla u)= e^{f}{\rm div}(e^{-f}\nabla u) =\Delta u-\langle \nabla f, \nabla u\rangle.
\end{align}
If the Riemannian volume form on $M$ (associated with metric $g$) is denoted by $d\nu$, then equipping $(M,g)$ with the weighted volume form $d\mu=e^{-f}d\nu$ makes the triple $(M,g,d\mu)$ a weighted manifold which is usually called  a smooth metric measure space.  It is clear  from (\ref{02}) that drifting Laplacian, $\Delta_{f}$, is a self-adjoint operator with respect to the weighted measure, $d\mu=e^{-f}d\nu$, in the sense that 
\begin{align}
\int_M (\Delta_{f} u) w d\mu = - \int_M \langle \nabla u, \nabla w\rangle d\mu = \int_M u  (\Delta_{f} w)d\mu.
\end{align}
Assume that $v: M\times[a,b]\to \mathbb{R}^n$ is smooth and solves the weighted heat equation 
\begin{align}\label{h}
v_t- \Delta_{f} v =0
\end{align}
 on $M$ such that $u,u_t \in W^{1,2}(M,d\mu)$, Colding and Minicozzi showed in \cite{TH} that the frequency functional $H(t)$ defined by 
\begin{align}
H(t):= - \frac{\int_M |\nabla v|^2 d\mu}{\int_M v^2 d\mu}
\end{align}
is monotonically nonincreasing  under (\ref{h}) with no restriction on the curvature of $M$, See \cite[Theorem 0.6]{TH}.  For a solution of (\ref{h}) Baldauf and Kim \cite{JB} defined the  parabolic frequency  involving some correction term as follows
$$U(t)= \frac{\tau(t)\langle \Delta_f u, u\rangle_{L^{2}(d\mu)}^{2}}{|| u||_{L^{2}(d\mu)}^{2}}e^{-\int \frac{1-k(t)}{\tau(t)}dt}, $$
where  $\tau(t)$ is the backwards time, $k(t)$ be the time-dependent function, and $d\mu$ is the
weighted measure. They proved that parabolic frequency $U(t)$ for the solution of weighted heat equation is monotone increasing along the Ricci flow with bounded Bakry-\'{E}mery Ricci curvature.  See also \cite{Ab, Az,LLX, LZ} for similar results on time-dependent Riemannian metrics.

However, we will consider two generalizations of (\ref{02}) in this paper.  First, for a symmetric positive  definite  $(1,1)$-tensor $T$ on $M$ and $f\in C^{\infty}(M)$ or atleast $C^2$, we define a generalized diffusion operator
\begin{equation}\label{05}
\mathcal{L}u={\rm div}_{f}(T(\nabla u))={\rm div}(T(\nabla u))-\langle \nabla f, T(\nabla u)\rangle
\end{equation}
 for any $C^{2}$ function $u$ on $M$.  Note that if $T=id$ (\ref{05}) reduces to (\ref{02}). Throughout this paper, for any vector field $X,Y$ on $M$, we denote $\langle T(X),Y\rangle$ by  $T(X,Y)$. From the properties of ${\rm div}_{f}$ and the symmetry of $T$ we have
\begin{equation*}
\mathcal{L}(uv)=v\mathcal{L}(u)+u\mathcal{L}(v)+2T(\nabla u,\nabla v),
\end{equation*}
for any $u,v\in C^{2}(M)$.
Let $\Omega\subset M$ be  a compact bounded domain  with smooth boundary $\partial \Omega$ in $M$. Let $dm$ be the volume form on the boundary induced by the outward normal vector field  $\overrightarrow n $ on  $\partial \Omega$. The divergence theorem for operator $\mathcal{L}$ gives
\begin{equation*}
\int_{\Omega}\mathcal{L}u d\mu=\int_{\partial \Omega}T(\nabla u,\overrightarrow n)dm.
\end{equation*}
Therefore the integration by parts formula is
\begin{equation*}
\int_{\Omega}v\mathcal{L}u d\mu=-\int_{\Omega}T(\nabla u, \nabla v)d\mu+\int_{\partial \Omega}vT(\nabla u,\overrightarrow n)dm.
\end{equation*}
Thus in the space of functions in $L^{2}(\Omega, d\mu)$ which vanish on  $\partial \Omega$ the operator $\mathcal{L}$  is a self-adjoint operator.
The  eigenvalues of the operator $\mathcal{L}$ are studied in \cite{GO, MA}. 

The second generalization is the so called weighted $p$-diffusion operator $\mathcal{L}_p$, defined by
\begin{align}\label{p}
\mathcal{L}_p : & = {\rm div}_f (|\nabla u|^{p-2}T(\nabla u))  =e^{f} {\rm div} (e^{-f} |\nabla u|^{p-2}T(\nabla u)) \nonumber \\
& =  {\rm div} (|\nabla u|^{p-2}T(\nabla u)) - |\nabla u|^{p-2} \langle\nabla f, T(\nabla u)\rangle.
\end{align}
When $T$ is the identity,  $\mathcal{L}_p $ is the weighted $p$-Laplacian, if in addition $p=2$, it is then the drifting Laplacian (\ref{02}). If $T$ is the identity and $f$ is constant then we have the usual $p$-Laplacian,  
$\Delta_p u= {\rm div}(|\nabla u|^{p-2}\nabla u)$. Note that the usual $p$-Laplacian is nonlinear in general,

Now we denote $u_t(x,t)$ as partial derivative of $u(x,t)$, and consider the following linear heat equation
\begin{equation}\label{1}
u_{t}(x,t)-\mathcal{L}u(x,t)=\phi(t)u(x,t),
\end{equation}
 where $\phi$  is a smooth function with respect to time-variable $t$, $u$  is a  smooth function on $M\times [a,b]$. We define the parabolic frequency  $U(t)$ for the  solution of heat equation  (\ref{1})    as follows
 \begin{equation}\label{2}
U(t)=\frac{D(t)}{I(t)},
\end{equation}
where
 \begin{equation}\label{3}
D(t)=-\int_{M}T(\nabla u, \nabla u)d\mu=\int_{M}u\mathcal{L}ud\mu,
\end{equation}
and
 \begin{equation}\label{4}
I(t)=\int_{M}u^{2}d\mu.
\end{equation}

To conclude this introduction, we highlight the plan of the rest of the paper in order to put our results in proper perspective.  In Section \ref{S2}, we prove monotonicity of $U(t)$ defined in (\ref{2}) under the influence of the linear heat equation (\ref{1}). The result which is stated in Theorem \ref{t1} is a generalization of \cite[Theorem 0.6]{TH}.  Our method of proof follows a similar approach to \cite{TH} but it is more involved, whereas \cite[Theorem 0.6]{TH} becomes a special case of ours for $T=id$ on $M$ and $\phi(t)\equiv 0$. An immediate consequence of this result is stated in Corollary \ref{cor1}. Furthermore, Theorem \ref{t2} and Corollary \ref{cor2} are presented to demonstrate that our results hold also for more general parabolic inequalities, in which case we have assumed
\begin{align*}
|u_{t}-\mathcal{L} u|\leq \psi(t)\left(|u|+\sqrt{T(\nabla u, \nabla u)}\right)
\end{align*}
with  $\psi(t)$ being a time dependent constant function.

In the final part of this paper (Section \ref{S3}), a nonlinear $p$-heat-type equation involving weighted $p$-Laplacian,   $\Delta_{f,p}$, $p>1$ (where we have taken $T=id$ in (\ref{p})), is considered
\begin{align}\label{010}
|u|^{p-2}u_{t}-\Delta_{f,p} u  =\eta(t)|u|^{p-2}u
\end{align}
 on $M$, where  $\Delta_{f,p} : = {\rm div}_f (|\nabla u|^{p-2}\nabla u)$, $\eta$  is a smooth function with respect to time-variable $t$ and $u$  is a  smooth function on $M\times [a,b]$.   The $p$-Lpalacian heat-equation of the form 
 $$|u|^{p-2}u_{t}=\Delta_{p} u$$
is well studied in literature. For instance, in  \cite{BK, XZ} Harnack inequalities for its positive solutions were obtained, while Wang \cite{FW} studied gradient estimates on its positive smooth solution along the Ricci flow. Recently,  Xu, Shen and Wang in \cite{XX} discussed parabolic frequency monotonicity for $|u|^{p-2}u_{t}=\Delta_{p} u$ on a connected finite graph. For our case a corresponding parabolic frequency functional $U_p(t)$ is given on $M$ as
\begin{align}
U_p(t) := \frac{\int_M u \Delta_{f,p}u e^{-f} d\nu}{\int_M |u|^p e^{-f} d\nu}
\end{align} 
with potential function $f$ (see (\ref{sa2})).  Here we establish the monotonicity of $U_p(t)$ along the weighted $p$-heat-type equation (\ref{010})  (see Theorem \ref{t3}), and then discuss some of its consequences, most especially to deriving backward uniqueness for the $p$-heat equation on compact manifolds.  Our results in this section extends recent results obtained in \cite{XX}  for graph on one hand,  and that of \cite{TH} for $p=2$ on the other hand.

\section{Generalized parabolic frequency}\label{S2}
\subsection{Parabolic frequency on weighted heat-type equation}
Our first result in this section is the following.
\begin{theorem}\label{t1}
Let $u: M\times [a,b] \to \mathbb{R}^n$ solve the weighted heat-type equation (\ref{1}), then $U(t)$  (defined in (\ref{2}) ) is a nondecreasing function. Moreover, if $U$ is constant, then $u$ is an eigenfunction of $f$  and
\begin{equation}\label{azs}
u(x,t)=e^{tU(t)-aU(a)+\int_{a}^{t}\phi(\tau)d\tau}u(x,a).
\end{equation}
Also, if $\phi(t)$ is a nondecreasing function then $\log I$ is convex.
\end{theorem}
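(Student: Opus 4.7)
My plan is to differentiate $I(t)$ and $D(t)$ along the equation (\ref{1}), use the self-adjointness of $\mathcal{L}$ on $L^2(M,d\mu)$ (which holds with no boundary contribution because $M$ is closed), and then recognize $U'(t)$ as a Cauchy--Schwarz defect. First I would compute $I'(t) = 2\int_M u\,u_t\,d\mu$ and substitute $u_t = \mathcal{L}u + \phi u$ to obtain
\[
I'(t) = 2D(t) + 2\phi(t) I(t), \qquad (\log I)'(t) = 2U(t) + 2\phi(t).
\]
The second identity will be reused for the log-convexity claim. Next I would differentiate $D(t) = \int_M u\,\mathcal{L}u\,d\mu$, use self-adjointness to merge the two derivative terms into $2\int_M u_t\,\mathcal{L}u\,d\mu$, and substitute again to obtain $D'(t) = 2\int_M (\mathcal{L}u)^2\,d\mu + 2\phi(t) D(t)$.

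Assembling $U' = D'/I - D\,I'/I^2$, the contributions containing $\phi$ cancel exactly, leaving
\[
U'(t) = \frac{2}{I(t)^2}\Big(I(t)\int_M (\mathcal{L}u)^2\,d\mu - D(t)^2\Big).
\]
This is nonnegative by the Cauchy--Schwarz inequality applied to $D(t) = \int_M u\cdot \mathcal{L}u\,d\mu$, which gives the monotonicity of $U$.

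For the rigidity statement, if $U$ is constant then equality in Cauchy--Schwarz forces $\mathcal{L}u(\cdot,t) = \lambda(t)u(\cdot,t)$ pointwise; pairing with $u$ and integrating identifies $\lambda(t) \equiv U$, so $u(\cdot,t)$ is an eigenfunction of $\mathcal{L}$. Equation (\ref{1}) then reduces to the ODE $\partial_t u = (U+\phi(t))u$ at each fixed $x$, whose integration from $a$ to $t$ yields (\ref{azs}) after using $tU(t)-aU(a) = U(t-a)$ (valid because $U$ is constant). Finally, the log-convexity follows by differentiating $(\log I)' = 2U + 2\phi$ once more: $(\log I)''(t) = 2U'(t) + 2\phi'(t) \geq 0$, since $U' \geq 0$ by the first part and $\phi' \geq 0$ by hypothesis.

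I do not anticipate a real obstacle: the argument reduces to careful bookkeeping and a single application of Cauchy--Schwarz. The key tactical point to watch is the exact cancellation of the $\phi$-terms between $D'/I$ and $U\,I'/I$, because this cancellation is precisely what allows the conclusion to hold without any curvature or potential assumption. Sufficient regularity to differentiate under the integral is automatic from the smoothness of $u$ on the closed manifold.
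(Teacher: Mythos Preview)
Your proposal is correct and follows essentially the same approach as the paper: both differentiate $I$ and $D$, identify $(\log I)'=2U+2\phi$, and reduce $U'\ge 0$ to the Cauchy--Schwarz inequality for $\int u\,\mathcal{L}u\,d\mu$, with the same equality analysis and ODE integration for the rigidity case. The only cosmetic difference is that the paper differentiates $D(t)$ via its gradient form $-\int_M T(\nabla u,\nabla u)\,d\mu$ rather than the self-adjoint form, and you make the log-convexity step $(\log I)''=2U'+2\phi'\ge 0$ explicit whereas the paper leaves it implicit.
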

\begin{proof}
Direct computing  and  integrating by parts gives
\begin{eqnarray}\nonumber
I'(t)&=&2\int_{M}uu_{t}d\mu\\\nonumber&=&2\int_{M}u(\mathcal{L} u+\phi u)d\mu\\\nonumber&=&-2\int_{M}T(\nabla u,\nabla u)d\mu+2\phi(t)\int_{M}u^{2}d\mu\\\label{5}&=&2\phi(t)I(t)+2D(t),
\end{eqnarray}
and
\begin{eqnarray}\nonumber
D'(t)&=&-2\int_{M}T(\nabla u,\nabla u_{t})d\mu\\\nonumber&=&-2\int_{M}T(\nabla u,\nabla( \mathcal{L}u+\phi(t) u))d\mu\\\label{6}&=&2\int_{M}( \mathcal{L}u)^{2}d\mu+2\phi(t)D(t).
\end{eqnarray}
Using (\ref{5}) and the definition of $U$ we obtain
\begin{equation}
(\log I)'(t)=2\phi(t)+2\frac{D(t)}{I(t)}=2\phi(t)+2U(t).
\end{equation}
Thus, applying (\ref{5}), (\ref{6}), and definition of $D(t)$ we infer
\begin{eqnarray}\nonumber
D'(t)I(t)-D(t)I'(t)&=&\left(2\int_{M}( \mathcal{L}u)^{2}d\mu+2\phi(t)D(t)\right)\left(\int_{M}u^{2}d\mu\right)\\\nonumber&&-D(t)\left(2\phi(t)I(t)+2D(t)\right)\\\label{7}&=&
2\left(\int_{M}( \mathcal{L}u)^{2}d\mu\right)\left(\int_{M}u^{2}d\mu\right)-2\left( \int_{M}u\mathcal{L}u \,d\mu\right)^{2}\geq0.
\end{eqnarray}
The last inequality is due to the Cauchy-Schwarz inequality. From this we conclude that
\begin{equation}
U'(t)=\frac{D'(t)I(t)-D(t)I'(t)}{I^{2}(t)}\geq0.
\end{equation}
Therefore, $U(t)$ is a nondecreasing function. If  $U$ is constant then $U'(t)=0$ and the equality in the Cauchy-Schwarz inequality ( \ref{7}) implies that
\begin{equation}
\mathcal{L}u=c(t)u,
\end{equation}
for  some  time-dependent smooth function $c$. Hence, $u$ is an eigenfunction of $\mathcal{L}$.  By definition of $D(t)$ we deduce
\begin{equation}
D(t)=\int_{M}u\mathcal{L}ud\mu=c(t)\int_{M}u^{2}d\mu=c(t)I(t).
\end{equation}
It follows that $c(t)=U(t)$ and $\mathcal{L}u=U(t) u$. Let $v(x,t)=e^{-Ut}u(x,t)$. Hence,
\begin{equation}
v_{t}(x,t)=e^{-Ut}\left(-Uu+u_{t}\right)=e^{-Ut}\left(-Uu+\mathcal{L}u+\phi(t) u\right)=\phi(t)v(x,t).
\end{equation}
By solving the last equation we arrive at (\ref{azs}).

\end{proof}

An immediate consequence of the above theorem is given in the next corollary.
\begin{corollary}\label{cor1}
Let $u: M\times [a,b] \to \mathbb{R}^n$ solve the heat equation (\ref{1}), then
\begin{equation}
I(b)\geq I(a)e^{2\int_{a}^{b}\phi(t)dt+2U(a)(b-a)}.
\end{equation}
\end{corollary}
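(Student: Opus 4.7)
The plan is to use the differential identity for $\log I$ derived in the proof of Theorem \ref{t1} together with the monotonicity of $U(t)$. From equation (\ref{5}) in that proof, we have the clean formula
\begin{equation*}
(\log I)'(t) = 2\phi(t) + 2U(t).
\end{equation*}
Since Theorem \ref{t1} gives $U'(t) \geq 0$ on $[a,b]$, we have $U(t) \geq U(a)$ for every $t \in [a,b]$, so
\begin{equation*}
(\log I)'(t) \geq 2\phi(t) + 2U(a).
\end{equation*}

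Next, I would integrate this differential inequality over the interval $[a,b]$. This yields
\begin{equation*}
\log I(b) - \log I(a) \geq 2\int_a^b \phi(t)\,dt + 2U(a)(b-a),
\end{equation*}
and exponentiating both sides produces the claimed inequality. A mild implicit assumption is that $I(t) > 0$ on the interval so that $\log I$ is well defined; this is automatic away from the trivial zero solution, since $I(t) = \int_M u^2 d\mu$ and $u$ is smooth, and one can restrict to the maximal subinterval on which $I > 0$ (or appeal to backward uniqueness, which the paper discusses as a further consequence).

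There is essentially no obstacle here: the work has already been done inside the proof of Theorem \ref{t1}. The only thing to be careful about is not to re-derive $(\log I)' = 2\phi + 2U$, but rather to cite it directly from the computation leading to equation (\ref{5}), and then to apply the monotonicity $U(t) \geq U(a)$ before integrating. This is a short two-line argument once the theorem is in hand.
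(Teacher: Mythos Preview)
Your argument is correct and matches the paper's proof essentially line for line: both use $(\log I)'=2\phi+2U$ from Theorem~\ref{t1}, invoke the monotonicity $U(t)\ge U(a)$, integrate over $[a,b]$, and exponentiate. The only cosmetic difference is that you bound $U(t)$ pointwise before integrating, whereas the paper integrates first and then bounds $\int_a^b U(t)\,dt\ge U(a)(b-a)$; these are the same step.
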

\begin{proof}
From Theorem (\ref{t1}) we infer
\begin{eqnarray*}
\log I(b)-\log I(a)&=&\int_{a}^{b}(\log I)'(t)dt=2\int_{a}^{b}\phi(t)dt+2\int_{a}^{b}U(t)dt\\&\geq&2\int_{a}^{b}\phi(t)dt+2U(a)(b-a).
\end{eqnarray*}
\end{proof}

\subsection{Parabolic frequency on general parabolic inequality}
In the next we consider a more general parabolic inequality (\ref{a1}) and prove the following results.

\begin{theorem}\label{t2} Let $u: M\times [a,b] \to \mathbb{R}^n$  satisfies the following equation
\begin{equation}\label{a1}
|u_{t}-\mathcal{L} u|\leq \psi(t)\left(|u|+\sqrt{T(\nabla u, \nabla u)}\right)
\end{equation}
where  $\psi(t)$ is a time dependent function. Then
\begin{equation}\label{a2}
 \psi^{2}(t)\geq\left[\log (1-U(t))\right]'.
\end{equation}
\end{theorem}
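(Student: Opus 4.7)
The plan is to mimic the monotonicity computation of Theorem~\ref{t1}, treating $E:=u_t-\mathcal{L}u$ as an inhomogeneous error term governed by (\ref{a1}) and tracking it carefully throughout. The hypothesis together with $(a+b)^2\le 2(a^2+b^2)$ yields the key $L^2$ bound
\[
\int_M E^2\, d\mu \;\le\; 2\psi^2(t)\!\left(\int_M u^2\, d\mu + \int_M T(\nabla u,\nabla u)\, d\mu\right) \;=\; 2\psi^2(t)\bigl(I(t)-D(t)\bigr),
\]
which will drive the right-hand side of (\ref{a2}) and is the only place the hypothesis enters.

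Since $1-U=(I-D)/I$ and $I(I-D)>0$ by the positive definiteness of $T$, it suffices to show $(I-D)'I-I'(I-D)\le \psi^2\,I(I-D)$. Repeating the integrations by parts of Theorem~\ref{t1} with $u_t=\mathcal{L}u+E$ in place of the equation (\ref{1}) gives
\[
I' \;=\; 2D + 2\!\int_M uE\, d\mu,\qquad D' \;=\; 2\!\int_M (\mathcal{L}u)^2\, d\mu + 2\!\int_M E\,\mathcal{L}u\, d\mu.
\]
Direct algebraic expansion then collapses all the $\mathcal{L}u$--$\mathcal{L}u$ terms, once the Cauchy--Schwarz identity
\[
I\!\int_M(\mathcal{L}u)^2\,d\mu-D^2 \;=\; \tfrac{1}{I}\!\int_M(I\mathcal{L}u-Du)^2\,d\mu
\]
is applied, and it gathers the $E$-contributions into a single cross term, yielding
\[
(I-D)'I - I'(I-D) \;=\; -\tfrac{2}{I}\!\int_M F^2\, d\mu \;-\; 2\!\int_M F E\, d\mu,\qquad F:=I\mathcal{L}u-Du.
\]

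The crux is controlling the indefinite cross term $-2\!\int_M FE\, d\mu$; the naive Cauchy--Schwarz weighting $-2\!\int FE\le \tfrac{1}{I}\int F^2+ I\int E^2$ loses a factor of two and would produce only $[\log(1-U)]'\le 2\psi^2$. Instead I will use the weighted Young inequality $2|ab|\le \varepsilon a^2+b^2/\varepsilon$ with the precise choice $\varepsilon=2/I$, so that the $F^2$ contributions cancel exactly:
\[
-2\!\int_M FE\, d\mu \;\le\; \tfrac{2}{I}\!\int_M F^2\, d\mu + \tfrac{I}{2}\!\int_M E^2\, d\mu.
\]
Plugging in and invoking the $L^2$ bound on $E$ from the first paragraph then yields $(I-D)'I-I'(I-D)\le \tfrac{I}{2}\cdot 2\psi^2(I-D)=\psi^2 I(I-D)$, and dividing through by $I(I-D)$ gives (\ref{a2}). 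Identifying this tight Young weighting is the main obstacle; everything else is a direct adaptation of the bookkeeping in Theorem~\ref{t1}.
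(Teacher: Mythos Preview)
Your argument is correct and reaches the same sharp bound $D'I-DI'\ge -\tfrac{I}{2}\int_M E^2\,d\mu$ as the paper, after which both proofs finish identically via $\int_M E^2\,d\mu\le 2\psi^2(I-D)$. The organizational difference lies in how that bound is obtained. The paper works with the symmetric combination $u_t-\tfrac{1}{2}E=\tfrac{1}{2}(u_t+\mathcal{L}u)$: writing $I'=2A+B$ and $D=A-\tfrac{1}{2}B$ with $A=\int u(u_t-\tfrac{1}{2}E)$ and $B=\int uE$ makes $I'D=2A^2-\tfrac{1}{2}B^2$ a difference of squares, while $D'=2\int(u_t-\tfrac{1}{2}E)^2-\tfrac{1}{2}\int E^2$; a single application of Cauchy--Schwarz to $I\int(u_t-\tfrac{1}{2}E)^2\ge A^2$ then gives the result with no weight to choose. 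Your route keeps $\mathcal{L}u$ as the reference, packages the Cauchy--Schwarz defect as $\tfrac{2}{I}\int F^2$ with $F=I\mathcal{L}u-Du$, and must then pick the Young weight $\varepsilon=2/I$ so that the $F^2$ contributions cancel exactly. Both are natural; the paper's polarization avoids the ``tight weighting'' step you flag as the main obstacle, while your decomposition makes the role of the Cauchy--Schwarz equality case (eigenfunctions of $\mathcal{L}$) more transparent through the explicit appearance of $F$.
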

\begin{proof}
We can write  $I'(t)$ as follows
\begin{equation}\label{a3}
I'(t)=2\int_{M}u\left(u_{t}-\frac{1}{2}(u_{t}-\mathcal{L}u)\right)d\mu+\int_{M}u\left(u_{t}-\mathcal{L}u\right)d\mu.
\end{equation}
Since
\begin{equation}
D(t)=\int_{M}u\left(u_{t}-\frac{1}{2}(u_{t}-\mathcal{L}u)\right)d\mu-\frac{1}{2}\int_{M}u\left(u_{t}-\mathcal{L}u\right)d\mu,
\end{equation}
we get
\begin{equation}\label{9}
I'(t)D(t)=2\left(\int_{M}u\left(u_{t}-\frac{1}{2}(u_{t}-\mathcal{L}u)\right)d\mu\right)^{2}-\frac{1}{2}\left(\int_{M}u\left(u_{t}-\mathcal{L}u\right)d\mu\right)^{2}.
\end{equation}
Differentiating $D(t)$ and using integration by parts we find
\begin{eqnarray*}
D'(t)&=&-2\int_{M}T(\nabla u,\nabla u_{t})d\mu\\&=&2\int_{M}u_{t}\mathcal{L}ud\mu\\&=&2\int_{M}\left\{
\left(u_{t}-\frac{1}{2}(u_{t}-\mathcal{L}u)\right)^{2}-\frac{1}{4}\left(u_{t}-\mathcal{L}u\right)^{2}
\right\}d\mu.
\end{eqnarray*}
Hence,
\begin{equation}\label{10}
D'(t)I(t)=2I(t)\int_{M}
\left(u_{t}-\frac{1}{2}(u_{t}-\mathcal{L}u)\right)^{2}d\mu-\frac{I(t)}{2}\int_{M}\left(u_{t}-\mathcal{L}u\right)^{2}
d\mu.
\end{equation}
Using the Cauchy-Schwarz inequality, (\ref{9}), (\ref{10}), and (\ref{a1}) we deduce
\begin{eqnarray}\nonumber
&&D'(t)I(t)-D(t)I'(t)\\\nonumber&=&2\left\{\int_{M}u^{2}d\mu\int_{M}
\left(u_{t}-\frac{1}{2}(u_{t}-\mathcal{L}u)\right)^{2}d\mu-\left(\int_{M}u\left(u_{t}-\frac{1}{2}(u_{t}-\mathcal{L}u)\right)d\mu\right)^{2}\right\}\\&&-\frac{I(t)}{2}\int_{M}\left(u_{t}-\mathcal{L}u\right)^{2}d\mu+\frac{1}{2}\left(\int_{M}u\left(u_{t}-\mathcal{L}u\right)d\mu\right)^{2}\\\nonumber&\geq&
-\frac{I(t)}{2}\int_{M}\left(u_{t}-\mathcal{L}u\right)^{2}d\mu\\\nonumber&\geq&
-\frac{\psi^{2}(t)I(t)}{2}\int_{M}\left(|u|+\sqrt{T(\nabla u, \nabla u)}\right)^{2}d\mu\\\nonumber&\geq&
-\psi^{2}(t)I(t)\left(I(t)-D(t)\right).
\end{eqnarray}
Dividing  both sides of last inequality  by $I^{2}(t)$ we obtain $U'(t)\geq \psi^{2}(t)\left(U(t)-1\right)$ and  this completes the proof.
\end{proof}

Applying Theorem \ref{t2} we can prove a generalization of Corollary \ref{cor1} as follows.
\begin{corollary}\label{cor2}
If $u: M\times [a,b] \to \mathbb{R}^n$  satisfies (\ref{a1}), then
\begin{equation*}
I(b)\geq I(a)\exp\left\{(b-a)\left( (2+\mathop{\sup}\limits_{[a,b]}\psi)\exp(\int_{a}^{b}\psi^{2}(t)dt)(U(a)-1)+2-\mathop{\sup}\limits_{[a,b]}\psi\right) \right\}.
\end{equation*}
In particular, if $u(\cdot,b)=0$ then $u=0$.
\end{corollary}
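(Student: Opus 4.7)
The approach is to derive a differential lower bound for $(\log I)'(t)$ in terms of $U(t)$ and $\psi(t)$, and then feed in an explicit lower bound for $U(t)$ obtained by integrating the conclusion of Theorem~\ref{t2}. First I would compute
\begin{equation*}
I'(t) = 2\int_M u u_t \, d\mu = 2D(t) + 2\int_M u(u_t - \mathcal{L}u) \, d\mu,
\end{equation*}
and control the error term via (\ref{a1}) together with Young's inequality $|u|\sqrt{T(\nabla u, \nabla u)} \leq \tfrac{1}{2}(u^2 + T(\nabla u, \nabla u))$. This produces a pointwise bound of the form $\bigl|\int_M u(u_t - \mathcal{L}u)\, d\mu\bigr| \leq \tfrac{\psi(t)}{2}(3I(t) - D(t))$, so after dividing by $I(t)$ one obtains an inequality
$(\log I)'(t) \geq (2+\psi(t))\,U(t) - c\,\psi(t)$
for an explicit constant $c$.

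Second, the inequality $[\log(1 - U(t))]' \leq \psi^2(t)$ from Theorem~\ref{t2} may be integrated directly. Since $T$ is positive definite, $D(t) \leq 0$, hence $U(t) \leq 0 < 1$, and integration gives
\begin{equation*}
U(t) \geq 1 + (U(a)-1)\exp\!\left(\int_a^t \psi^2(s)\, ds\right).
\end{equation*}
Substituting this lower bound for $U(t)$ into the estimate for $(\log I)'(t)$ and integrating from $a$ to $b$ should deliver the claim. Because $U(a) - 1 \leq 0$, replacing $\psi(t)$ by $\sup_{[a,b]}\psi$ and $\exp(\int_a^t \psi^2)$ by $\exp(\int_a^b \psi^2)$ in the appropriate places preserves the direction of the inequality, and collecting terms yields an exponent of the form $(b-a)\bigl[(2+\sup\psi)\exp(\int_a^b \psi^2)(U(a)-1) + (2 - \sup\psi)\bigr]$, which is the stated bound.

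For the final assertion, the inequality just proved applies verbatim on any subinterval $[t_0, b] \subset [a,b]$, giving $I(b) \geq I(t_0)\exp(\text{a finite quantity}) \geq 0$ with strict inequality whenever $I(t_0) > 0$. Consequently $u(\cdot,b) = 0$ forces $I(t_0) = 0$ for every $t_0 \in [a,b]$, and therefore $u \equiv 0$ on $M\times[a,b]$.

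The main obstacle will lie in the first step. Different choices of Young's or Cauchy--Schwarz inequality for $\int_M |u|\sqrt{T(\nabla u, \nabla u)}\, d\mu$ produce slightly different constants in the lower bound for $(\log I)'(t)$, and some care is needed to match exactly the combination $(2 \pm \sup \psi)$ that appears in the target exponential. Once the correct pointwise inequality is in hand, the rest is just integration and careful bookkeeping of signs.
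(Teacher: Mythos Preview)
Your proposal is correct and follows essentially the same route as the paper. The paper also writes $I'(t)=2D(t)+2\int_M u(u_t-\mathcal{L}u)\,d\mu$, bounds the cross term $\int_M|u|\sqrt{T(\nabla u,\nabla u)}\,d\mu$ (it uses Cauchy--Schwarz followed by $\sqrt{-U}\le\tfrac12(1-U)$ rather than your pointwise Young inequality, but these give the identical bound), arrives at $(\log I)'(t)\ge(2+\psi(t))U(t)-3\psi(t)$, and then integrates the conclusion of Theorem~\ref{t2} exactly as you describe. Your derivation in fact yields the constant $c=3$, so after substitution you get the additive term $2-2\sup\psi$ rather than $2-\sup\psi$; the paper's own computation produces the same $2-2\sup\psi$, so the discrepancy is a typo in the statement, not a defect in your argument.
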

\begin{proof}
From (\ref{a3}) we have
\begin{eqnarray*}
(\log I)'(t)& =&2U(t)+\frac{2}{I(t)}\int_{M}u\left(u_{t}-\mathcal{L}u\right)d\mu\\
&\geq& 2U(t)-\frac{2\psi(t)}{I(t)}\int_{M}|u|\left(|u|+\sqrt{T(\nabla u, \nabla u)}\right)d\mu\\
&\geq&2U(t)-2\psi(t)-\frac{2\sqrt{2}\psi(t)}{I(t)}\int_{M}|u|\sqrt{T(\nabla u, \nabla u)}d\mu\\
&\geq&2U(t)-2\psi(t)(1+\sqrt{-U(t)})\geq (2+\psi(t))U(t)-3\psi(t).
\end{eqnarray*}
Thus, by taking integration we get
\begin{equation}\label{azs2}
\log I(b)-\log I(a)=\int_{a}^{b}(\log I)'(t)dt\geq (2+\mathop{\sup}\limits_{[a,b]}\psi)\int_{a}^{b}U(t)dt-3\mathop{\sup}\limits_{[a,b]}\psi (b-a).
\end{equation}
For $t\in[a,b]$, using (\ref{a2})  we obtain
\begin{equation}\nonumber
\log(1-U(t))\leq (1-U(a))+\int_{a}^{t}\psi^{2}(s)ds\leq (1-U(a))+\int_{a}^{b}\psi^{2}(s)ds.
\end{equation}
Hence,
\begin{equation}\label{azs3}
U(t))\geq (U(a)-1)\exp\left( \int_{a}^{b}\psi^{2}(s)ds\right)+1.
\end{equation}
Substituting (\ref{azs3}) in (\ref{azs2}), we deduce
\begin{equation}\nonumber
\log I(b)-\log I(a)\geq(b-a)\left( (2+\mathop{\sup}\limits_{[a,b]}\psi)\exp(\int_{a}^{b}\psi^{2}(t)dt)(U(a)-1)+2-\mathop{\sup}\limits_{[a,b]}\psi\right).
\end{equation}
This completes the proof of corollary.
\end{proof}
\begin{remark}
If we consider  $T=id$, then we obtain results of \cite{TH}.
\end{remark}

\section{p-parabolic frequency}\label{S3}
In this section we  consider the following nonlinear weighted $p$-heat-type equation for $1<p<\infty$
\begin{equation}\label{sa1}
|u(x,t)|^{p-2}u_{t}(x,t)-\Delta_{f,p} u(x,t)=\eta(t)|u(x,t)|^{p-2}u(x,t),
\end{equation}
 where $\eta$  is a smooth function with respect to time-variable $t$, $u$  is a  smooth function on $M\times [a,b]$.
Here, the parabolic frequency  $U_p(t)$ for the  solution of  (\ref{sa1})  is defined as follows
 \begin{equation}\label{sa2}
U_{p}(t)=\frac{D_{p}(t)}{I_{p}(t)},
\end{equation}
where
 \begin{equation}\label{sa4}
D_{p}(t)=\int_{M} u\Delta_{f,p}u d\mu=-\int_{M}|\nabla u|^{p}d\mu,
\end{equation}
and
 \begin{equation}\label{sa5}
I_{p}(t)=\int_{M}|u|^{p}d\mu
\end{equation}
with $d\mu = e^{-f}d\nu$ as defined in the introduction.
Our main results here is stated in the next theorem and we give an application in Corollary \ref{cor3}.
\begin{theorem}\label{t3}
Let $u: M\times [a,b] \to \mathbb{R}^n$  solves the heat equation (\ref{sa1}), then $U_p(t)$ is a nondecreasing function. Moreover, if $U_{p}(t)$ is constant, then $u$ is an eigenfunction of $f$  and
\begin{equation}\label{azs1}
u(x,t)|u(x,t)|^{p-2}=e^{(p-1)(U(t)t-U(a)a)}u(x,a)|u(x,a)|^{p-2}\exp\{(p-1)\int_{a}^{s}\eta(s)ds\}.
\end{equation}
Also, if $\eta(t)$ is a nondecreasing function then $\log I$ is convex.
\end{theorem}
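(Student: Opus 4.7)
The plan is to mirror the argument of Theorem \ref{t1}: compute $I_p'(t)$ and $D_p'(t)$ using the PDE (\ref{sa1}) and integration by parts, then show that $D_p'(t)I_p(t) - D_p(t) I_p'(t) \ge 0$ via an appropriate Cauchy--Schwarz inequality tailored to the $p$-Laplacian.

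The first step is to differentiate $I_p(t) = \int_M |u|^p d\mu$ under the integral sign and use (\ref{sa1}) in the form $|u|^{p-2}u_t = \Delta_{f,p}u + \eta(t)|u|^{p-2}u$; this yields $I_p'(t) = p D_p(t) + p\eta(t) I_p(t)$ and, as a by-product, the identity $\int_M |u|^{p-2} u u_t\, d\mu = D_p(t) + \eta(t) I_p(t)$. For $D_p'(t)$ I would start from $D_p(t) = -\int_M |\nabla u|^p d\mu$, differentiate, integrate by parts (using self-adjointness of $\Delta_{f,p}$ on $(M,d\mu)$) to get $D_p'(t) = p\int_M u_t \Delta_{f,p}u\,d\mu$, and substitute the PDE once more to rewrite this as $p\int_M |u|^{p-2}u_t^2 d\mu - p\eta(t)(D_p(t) + \eta(t) I_p(t))$. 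Assembling these two pieces collapses neatly to
$D_p'(t)I_p(t) - D_p(t) I_p'(t) = p\bigl(I_p(t)\int_M |u|^{p-2}u_t^2 d\mu - (D_p(t)+\eta(t) I_p(t))^2\bigr)$.

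The key obstacle, and the main place where the proof genuinely departs from the $p=2$ case, is identifying the correct Cauchy--Schwarz inequality to bound this from below. I would use the factorization $|u|^{p-2}uu_t = \bigl(\mathrm{sgn}(u)\,|u|^{p/2}\bigr)\cdot\bigl(|u|^{(p-2)/2}u_t\bigr)$, whose two factors have squares $|u|^p$ and $|u|^{p-2}u_t^2$, respectively; Cauchy--Schwarz then gives $(D_p+\eta I_p)^2 \le I_p \int_M |u|^{p-2} u_t^2\, d\mu$, so $U_p'(t) \ge 0$. Note that this reduces to the Cauchy--Schwarz step $(\int u\mathcal{L}u)^2 \le (\int u^2)(\int (\mathcal{L}u)^2)$ in Theorem \ref{t1} exactly when $p=2$.

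For the rigidity statement, constancy of $U_p$ forces equality in Cauchy--Schwarz, whence $|u|^{(p-2)/2}u_t = c(t)\,\mathrm{sgn}(u)|u|^{p/2}$, i.e.\ $u_t = c(t)u$ for some function $c(t)$. Inserting this back into (\ref{sa1}) shows that $\Delta_{f,p}u = (c(t)-\eta(t))|u|^{p-2}u$, so $u$ is a (time-independent) eigenfunction; testing against $u$ and using (\ref{sa4})--(\ref{sa5}) identifies $c(t) = U_p(t) + \eta(t)$. Solving the ODE $u_t = (U_p + \eta(t))u$ and raising to the $(p-1)$-th power then gives (\ref{azs1}). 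Finally, the relation $(\log I_p)'(t) = pU_p(t) + p\eta(t)$ follows directly from the expression for $I_p'$ already computed, and monotonicity of both $U_p$ (just proved) and $\eta$ (by hypothesis) immediately yields convexity of $\log I_p$.
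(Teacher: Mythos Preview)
Your proposal is correct and follows essentially the same route as the paper's proof: compute $I_p'$ and $D_p'$ via the PDE and integration by parts, then apply a weighted Cauchy--Schwarz inequality adapted to the $p$-structure. The only cosmetic difference is that the paper works with $\Delta_{f,p}u$ rather than $u_t$ in the Cauchy--Schwarz step, writing $D_p'I_p - D_pI_p' = p\bigl(\int_M |u|^{2-p}(\Delta_{f,p}u)^2\,d\mu\bigr)\bigl(\int_M |u|^p\,d\mu\bigr) - p\bigl(\int_M u\,\Delta_{f,p}u\,d\mu\bigr)^2$ and applying Cauchy--Schwarz to the factorization $u\,\Delta_{f,p}u = |u|^{p/2}\cdot |u|^{1-p/2}\Delta_{f,p}u$; since $|u|^{p-2}u_t$ and $\Delta_{f,p}u$ differ only by $\eta(t)|u|^{p-2}u$, the two formulations are equivalent, and the rigidity and convexity conclusions are reached identically.
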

\begin{proof}
By a straightforward computation we have 
  \begin{equation}\label{sa03}
\partial_{t}|u|^{p}=p|u|^{p-2}uu_t,
\end{equation}
  \begin{equation}\label{sa3}
\partial_{t}|\nabla u|^{p}=p|\nabla u|^{p-2}\langle\nabla u, \nabla u_{t}\rangle.
\end{equation}
So we calculate the derivative of $I_{p}(t)$ as follows
\begin{eqnarray}\label{sa6}
I'_{p}(t)&=&\int_{M}p|u|^{p-2}uu_{t}d\mu.
\end{eqnarray}
Noting  that by (\ref{sa4})
  \begin{equation}\label{sa7}
\int_{M}|\nabla u|^{p}d\mu=-\int_{M} u \Delta_{p}u d\mu=-D_{p}(t).
\end{equation}
Multiplying both sides of (\ref{sa1}) by $u$ and integrate over $M$ with respect to $d\mu$ gives
  \begin{equation}\label{sa8}
\int_{M}(|u|^{p-2}u_{t}-\Delta_{f,p} u)ud\mu= \int_{M} \eta(t)|u|^{p}d\mu=\eta(t)I_{p}(t).
\end{equation}
Clearly combining (\ref{sa7}) and (\ref{sa8}) yields
\begin{align*}
\int_{M} |u|^{p-2}u_{t} ud\mu= \eta(t)I_{p}(t) + D_p(t)
\end{align*}
from where we conclude by reverting to (\ref{sa6}) that
  \begin{equation}\label{sa9}
I'_{p}(t)=p\eta(t)I_{p}(t)+pD_{p}(t).
\end{equation}
Now, we compute the derivative of $D_{p}(t)$. Applying (\ref{sa3}) and integration by parts, we  get
\begin{eqnarray}\nonumber
D'_{p}(t)&=& -\int_{M}p|\nabla u|^{p-2}\langle\nabla u, \nabla u_{t}\rangle d\mu\\\label{sa10}&=&
p\eta(t)D_{p}(t)+p\int_{M}|u|^{2-p}(\Delta_{f,p} u)^{2}d\mu.
\end{eqnarray}
Using (\ref{sa9}), (\ref{sa10}), and  the Cauchy-Schwarz inequality,  we arrive at
\begin{eqnarray*}\nonumber
&&D'_{p}(t)I_{p}(t)-D_{p}(t)I'_{p}(t)\\&=&\left( p\eta(t)D_{p}(t)+p\int_{M}|u|^{2-p}(\Delta_{f,p} u)^{2}d\mu\right)I_{p}(t)\\&&-D_{p}(t)\left(p\eta(t)I_{p}(t)+pD_{p}(t)\right)\\&=&
p\left(\int_{M}|u|^{2-p}(\Delta_{f,p} u)^{2}d\mu\right)\left(\int_{M}|u|^{p}d\mu\right)-p\left(\int_{M}u\Delta_{f,p}ud\mu\right)^{2}\geq0
\end{eqnarray*}
since $p>1$.
Thus we have obtained 
\begin{equation*}
U'_{p}(t)=\frac{D'_{p}(t)I_{p}(t)-D_{p}(t)I'_{p}(t)}{I_{p}^{2}(t)}\geq0
\end{equation*}
which  implies that  $U$ is hence nondecreasing. If $U$ is constant then $U'_{p}(t)=0$  and equality in the Cauchy-Schwarz inequality implies that $\Delta_{f,p}u=\alpha(t) u|u|^{p-2}$ for some time-dependent smooth function $\alpha$. Thus, $u$ is an eigenfunction of $\Delta_{f,p}$. In this case, we have
\begin{equation*}
D_{p}(t)=\int_{M}u\Delta_{f,p} ud\mu=\alpha(t)I_{p}(t),
\end{equation*}
and consequently, $\alpha(t)=U(t)$, that is  $\Delta_{f,p}u=U(t) u$. Let $$w(x,t)=e^{-(p-1)Ut}u(x,t)|u(x,t)|^{p-2}.$$
By taking derivative  of $w(x,t)$ with respect to $t$ we obtain
\begin{equation*}
w_{t}(x,t)=(p-1)e^{-(p-1)Ut}\left(-U u|u|^{p-2}+u_{t}|u|^{p-2}\right)=(p-1)\eta(t)w(x,t)
\end{equation*}
Solving the last equation gives (\ref{azs1}).

\end{proof}
The last theorem has the following as an immediate consequence.
\begin{corollary}\label{cor3}
If $u: M\times [a,b] \to \mathbb{R}^n$  satisfies in (\ref{sa1}) then
\begin{equation}
I_{p}(b)\geq I_{p}(a)e^{p\int_{a}^{b}\eta(t)dt+pU_{p}(a)(b-a)}.
\end{equation}
\end{corollary}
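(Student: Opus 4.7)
The plan is to mimic the proof of Corollary \ref{cor1} almost verbatim, replacing the linear identity $(\log I)'(t)=2\phi(t)+2U(t)$ with its $p$-analogue extracted from the proof of Theorem \ref{t3}.

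First I would revisit equation (\ref{sa9}), which gives $I'_p(t)=p\eta(t)I_p(t)+pD_p(t)$. Dividing both sides by $I_p(t)$ and using the definition (\ref{sa2}) of $U_p$, I immediately obtain the clean identity
\begin{equation*}
(\log I_p)'(t)=p\eta(t)+pU_p(t).
\end{equation*}
Integrating this from $a$ to $b$ yields
\begin{equation*}
\log I_p(b)-\log I_p(a)=p\int_a^b \eta(t)\,dt+p\int_a^b U_p(t)\,dt.
\end{equation*}

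Next I would invoke the monotonicity conclusion of Theorem \ref{t3}, namely that $U_p$ is nondecreasing on $[a,b]$, to replace the integral $\int_a^b U_p(t)\,dt$ by the lower bound $U_p(a)(b-a)$. This gives
\begin{equation*}
\log I_p(b)-\log I_p(a)\geq p\int_a^b \eta(t)\,dt+pU_p(a)(b-a),
\end{equation*}
and exponentiating produces the claimed inequality.

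There is essentially no obstacle here, since the required monotonicity of $U_p$ has already been done inside Theorem \ref{t3}; the only very minor point to watch is the sign convention in (\ref{sa4}) (where $D_p(t)=-\int_M |\nabla u|^p\,d\mu\leq 0$, so $U_p(a)$ may be negative), which is harmless because the derivation of $(\log I_p)'$ does not use any sign of $U_p$. The corollary is therefore a direct consequence of the monotonicity formula and the fundamental theorem of calculus.
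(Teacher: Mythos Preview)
Your proposal is correct and matches the paper's own proof essentially line for line: the paper also computes $\log I_p(b)-\log I_p(a)=\int_a^b(\log I_p)'(t)\,dt=p\int_a^b\eta(t)\,dt+p\int_a^b U_p(t)\,dt$ and then applies the monotonicity of $U_p$ from Theorem~\ref{t3} to bound the last integral below by $U_p(a)(b-a)$.
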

\begin{proof}
From Theorem (\ref{t3}) we infer
\begin{eqnarray*}
\log I_{p}(b)-\log I_{p}(a)&=&\int_{a}^{b}(\log I_{p})'(t)dt=p\int_{a}^{b}\eta(t)dt+p\int_{a}^{b}U_{p}(t)dt\\&\geq&p\int_{a}^{b}\eta(t)dt+pU_{p}(a)(b-a).
\end{eqnarray*}
The proof is therefore complete. 
\end{proof}

\subsection{General parabolic operators}
In the next we consider a more general parabolic inequality for $p>1$
\begin{equation}\label{314}
|(|u|^{p-2}u_{t}- \Delta_{f,p} u)|\leq \psi(t)\left[|u|^{p-1}(1+|u|^{-\frac{p}{2}}\sqrt{|\nabla u|^p})\right],
\end{equation}
where  $\psi(t)$ is a time dependent function, and prove the following results.

\begin{theorem} Let $u: M\times [a,b] \to \mathbb{R}^n$  satisfies (\ref{314}). Then
\begin{align}\label{315}
U'(t) & \ge  \frac{p}{2}\psi^2(t)[U_p(t)-1],
\end{align}
\begin{align}\label{316}
 \psi^{2}(t)& \ge \frac{2}{p}\left[\log (1-U_p(t))\right]'
\end{align}
and 
\begin{align}\label{317}
[\log I(t)]' \ge  p(1+\psi(t)/2)U_p(t) - (3p/2)\psi(t).
\end{align}
\end{theorem}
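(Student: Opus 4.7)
The plan is to adapt the symmetric/antisymmetric splitting technique from the proof of Theorem \ref{t2} to the nonlinear $p$-setting. First I would introduce
\begin{equation*}
A := \tfrac{1}{2}\bigl(|u|^{p-2}u_t + \Delta_{f,p}u\bigr), \qquad B := |u|^{p-2}u_t - \Delta_{f,p}u,
\end{equation*}
so that $|u|^{p-2}u_t = A + \tfrac{1}{2}B$ and $\Delta_{f,p}u = A - \tfrac{1}{2}B$. Hypothesis (\ref{314}) then reads $|B| \le \psi(t)\bigl(|u|^{p-1} + |u|^{p/2-1}|\nabla u|^{p/2}\bigr)$, which is the pointwise bound I will feed into the $B$-terms at the very end.

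Using this decomposition in (\ref{sa6}) and the definition (\ref{sa4}) of $D_p$, I would rewrite the basic ingredients as
\begin{equation*}
I_p'(t) = p\int_M uA\, d\mu + \tfrac{p}{2}\int_M uB\, d\mu, \qquad D_p(t) = \int_M uA\, d\mu - \tfrac{1}{2}\int_M uB\, d\mu,
\end{equation*}
and reuse the integration by parts leading to (\ref{sa10}) to obtain $D_p'(t) = p\int_M u_t \Delta_{f,p}u\, d\mu = p\int_M |u|^{2-p}(A^2 - \tfrac{1}{4}B^2)\, d\mu$. Forming the combination $D_p'I_p - D_pI_p'$, the $A$-contributions collapse into $p\bigl[I_p\int |u|^{2-p}A^2\, d\mu - (\int uA\, d\mu)^2\bigr]$, which is nonnegative by the Cauchy-Schwarz inequality $(\int uA\, d\mu)^2 \le (\int |u|^p\, d\mu)(\int |u|^{2-p}A^2\, d\mu)$. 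Dropping the analogous nonnegative $B$-bracket then leaves
\begin{equation*}
D_p'(t)I_p(t) - D_p(t)I_p'(t) \ge -\tfrac{p}{4}I_p(t)\int_M |u|^{2-p}B^2\, d\mu.
\end{equation*}

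The main technical point --- and the reason (\ref{314}) is written with the curious weight $|u|^{-p/2}\sqrt{|\nabla u|^p}$ --- is the algebraic identity
\begin{equation*}
|u|^{2-p}\bigl(|u|^{p-1} + |u|^{p/2-1}|\nabla u|^{p/2}\bigr)^2 = \bigl(|u|^{p/2} + |\nabla u|^{p/2}\bigr)^2 \le 2(|u|^p + |\nabla u|^p),
\end{equation*}
which is exactly what is needed to convert the $|u|^{2-p}B^2$ integrand into quantities that assemble into $I_p - D_p$. This yields $\int_M |u|^{2-p}B^2\, d\mu \le 2\psi^2(t)(I_p(t) - D_p(t))$, and dividing the previous display by $I_p^2(t)$ produces (\ref{315}). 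Inequality (\ref{316}) then follows by the one-line manipulation $[\log(1-U_p)]'(t) = -U_p'(t)/(1-U_p(t))$ applied to (\ref{315}).

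For (\ref{317}) I would return to the identity $I_p'(t) = pD_p(t) + p\int_M uB\, d\mu$, so that $(\log I_p)'(t) = pU_p(t) + (p/I_p(t))\int_M uB\, d\mu$. Bounding $|B|$ pointwise via (\ref{314}) and applying Cauchy-Schwarz in the form $\int_M |u|^{p/2}|\nabla u|^{p/2}\, d\mu \le I_p^{1/2}(-D_p)^{1/2} = I_p(t)\sqrt{-U_p(t)}$ gives $(\log I_p)'(t) \ge pU_p(t) - p\psi(t)\bigl(1 + \sqrt{-U_p(t)}\bigr)$. Since $U_p \le 0$, the AM-GM inequality $2\sqrt{-U_p} \le 1 - U_p$ converts this into exactly (\ref{317}), completing the proposed proof.
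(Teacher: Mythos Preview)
Your proposal is correct and follows essentially the same approach as the paper: the decomposition into $A=\tfrac12(|u|^{p-2}u_t+\Delta_{f,p}u)$ and $B=|u|^{p-2}u_t-\Delta_{f,p}u$ is exactly the splitting the paper writes out in long form in (\ref{318})--(\ref{322}), the Cauchy--Schwarz step on the $A$-terms, the discard of the nonnegative $(\int uB)^2$ contribution, and the algebraic reduction $|u|^{2-p}\bigl(|u|^{p-1}+|u|^{p/2-1}|\nabla u|^{p/2}\bigr)^2\le 2(|u|^p+|\nabla u|^p)$ all match the paper's argument, as does the treatment of (\ref{317}) via $I_p'=pD_p+p\int uB$, Cauchy--Schwarz on $\int|u|^{p/2}|\nabla u|^{p/2}$, and the AM--GM bound $2\sqrt{-U_p}\le 1-U_p$.
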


\begin{proof}
Note that all the integrals in this subsection are over $M$ with respect to the weighted measure $d\mu=e^{-f}d\nu$.  First we write the expression for the derivative of $I_p(t)$ as follows:
\begin{align}\label{318}
I_p'(t) & = p\int |u|^{p-2}uu_t \nonumber\\
& = p\int u\Delta_{f,p} u + p\int \left\langle u,  (|u|^{p-2}u_t -  \Delta_{f,p} u) \right\rangle  \nonumber\\
& = p\int \left\langle u,  \left[|u|^{p-2}u_t -  \frac{1}{2} ( |u|^{p-2}u_t-  \Delta_{f,p} u)\right] \right\rangle + \frac{p}{2} \int \left\langle u, (|u|^{p-2}u_t - \Delta_{f,p} u) \right\rangle.
\end{align}
The quantity $D_p(t)$ can also be re-written as follows
\begin{align}\label{319}
D_p(t) & = \int \langle u, \Delta_{f,p} u\rangle \nonumber\\
& = \int  \left\langle u,  [|u|^{p-2}u_t -  \frac{1}{2} ( |u|^{p-2}u_t-  \Delta_{f,p} u)] \right\rangle - \frac{1}{2} \int \langle u, (|u|^{p-2}u_t - \Delta_{f,p} u) \rangle.
\end{align}
By (\ref{318}) and (\ref{319}) we obtain 
\begin{align}\label{320}
D_p(t)I_p'(t) & = p\left(\int  \langle u,  [|u|^{p-2}u_t -  \frac{1}{2} ( |u|^{p-2}u_t-  \Delta_{f,p} u)] \rangle \right)^2  \nonumber \\
& \hspace{1cm} -\frac{p}{2}\left(\int \langle u, (|u|^{p-2}u_t - \Delta_{f,p} u) \rangle \right)^2.
\end{align}
Similarly, differentiating the quantity $D_p(t)$ and rewrite the resulting expression gives
\begin{align}\label{321}
D_p'(t) & = -p\int |\nabla u|^{p-2}\langle\nabla u, \nabla u_t\rangle  = p\int \langle u_t,  \Delta_{f,p} u \rangle \nonumber\\
& = p \int \langle u_t, \left[|u|^{p-2}u_t - \left( |u|^{p-2}u_t - \Delta_{f,p} u \right) \right]\rangle \nonumber\\
& = p \int \Bigg\{ \Bigg| |u|^{\frac{p}{2}-1}u_t - \frac{1}{2} \left[ |u|^{1-\frac{p}{2}} \left(|u|^{p-2}u_t -\Delta_{f,p} u \right)\right]\Bigg|^2  \nonumber \\
& \hspace{1cm} - \frac{1}{4} \Bigg| |u|^{1-\frac{p}{2}}\left( |u|^{p-2}u_t -\Delta_{f,p} \right)\Bigg|^2   \Bigg\}.
\end{align}
Therefore we have by (\ref{321})
\begin{align}\label{322}
I_p(t)D_p'(t) & = pI_p(t) \int \Bigg| |u|^{\frac{p}{2}-1}u_t - \frac{1}{2} \left[ |u|^{1-\frac{p}{2}} \left(|u|^{p-2}u_t -\Delta_{f,p} u \right)\right]\Bigg|^2 \nonumber \\
& \hspace{1cm} - \frac{pI_p(t)}{4}  \int \Bigg| |u|^{1-\frac{p}{2}}\left( |u|^{p-2}u_t -\Delta_{f,p} \right)\Bigg|^2.
\end{align}
Combining (\ref{320}) and (\ref{322}) we have 
\begin{align*}
 I_p^2(t) U_p'(t)  =  &  \ I_p(t)D_p'(t) - D_p(t)I_p'(t)\\
  = & \  p\left(\int|u|^p\right) \int \Bigg| |u|^{\frac{p}{2}-1}u_t - \frac{1}{2} \left[ |u|^{1-\frac{p}{2}} \left(|u|^{p-2}u_t -\Delta_{f,p} u \right)\right]\Bigg|^2\\
 & - p \left(\int  \langle u,  [|u|^{p-2}u_t -  \frac{1}{2} ( |u|^{p-2}u_t-  \Delta_{f,p} u)] \rangle \right)^2 \\
 &  - \frac{pI_p(t)}{4}  \int \Bigg| |u|^{1-\frac{p}{2}}\left( |u|^{p-2}u_t -\Delta_{f,p} \right)\Bigg|^2 \\
 & + \frac{p}{2}\left(\int \langle u, (|u|^{p-2}u_t - \Delta_{f,p} u) \rangle \right)^2\\
 \geq  & \   - \frac{pI_p(t)}{4}  \int \Bigg| |u|^{1-\frac{p}{2}}\left( |u|^{p-2}u_t -\Delta_{f,p} \right)\Bigg|^2 \\
 & + \frac{p}{2}\left(\int \langle u, (|u|^{p-2}u_t - \Delta_{f,p} u) \rangle \right)^2\\
 \geq & \ - \frac{pI_p(t)}{4}  \int \Bigg| |u|^{1-\frac{p}{2}}\left( |u|^{p-2}u_t -\Delta_{f,p} \right)\Bigg|^2,
\end{align*}
where we have used the Cauchy-Schwarz inequality and the fact that $p>1$.

Now applying the general parabolic inequality (\ref{314}) leads us to
\begin{align*}
 I_p^2(t) U_p'(t) & \geq - \frac{p}{4} I_p(t)\psi^2(t)  \int |u|^{2-p}  \Bigg| \left( |u|^{p-1} +|u|^{\frac{p}{2}-1} \sqrt{|\nabla u|^p} \right)\Bigg|^2\ d\mu\\
 & \geq  - \frac{p}{4}  I_p(t)\psi^2(t) \int   \Big| \left( \sqrt{|u|^p}  + \sqrt{|\nabla u|^p} \right)\Big|^2\ d\mu\\
 &  \geq  - \frac{p}{2} \psi^2(t)   I_p(t) \Big(I_p(t) - D_p(t)\Big),
\end{align*}
where we have used the elementary inequality of the form $(a+b)^2 \le 2(a^2+b^2)$.  Dividing both sides by $I_p^2(t)$ yields the first claim, that is, (\ref{315}). Clearly (\ref{316}) follows from (\ref{315}).

Finally, we prove the last part of the theorem (i.e., (\ref{317})).  Here we write the expression for $I_p'(t)$ as follows
\begin{align}\label{323}
I_p'(t) & = p\int |u|^{p-2}u u_t \ d\mu \nonumber\\
& =  p\int_M u\Delta_{f,p} u \ d\mu + p\int_M \langle u,  (|u|^{p-2}u_t -  \Delta_{f,p} u) \rangle  d\mu \nonumber\\
&\geq pD_p(t) - p \psi(t) \int_M |u| \left[ |u|^{p-1} + |u|^{\frac{p}{2}-1} |\nabla u|^{\frac{p}{2}}\right]d\mu \nonumber\\
& = pD_p(t) - p\psi(t)I_p(t) - p\psi(t) \int_M   |u|^{\frac{p}{2}} |\nabla u|^{\frac{p}{2}}\ d\mu.
\end{align}
By the Cauchy-Schwarz inequality we know that
\begin{align}\label{324}
\int_M   |u|^{\frac{p}{2}} |\nabla u|^{\frac{p}{2}}\ d\mu & \le \sqrt{\left(\int_M |u|^p\ d\mu\right)} \sqrt{\left(\int_M |\nabla u|^p\ d\mu\right)}  \nonumber\\
& = \sqrt{I_p(t)} \sqrt{-D_p(t)}.
\end{align}
Combining (\ref{323}) and (\ref{324}) we have 
\begin{align*}
I_p'(t) \geq pD_p(t) - p\psi(t)\Big( I_p(t) + \sqrt{I_p(t)} \sqrt{-D_p(t)}\Big).
\end{align*}
Hence
\begin{align}\label{325}
[\log I_p(t)]' =  I_p'(t)/I_p(t) \ge pU_p(t) - p\psi(t)\Big( 1 + \sqrt{-U_p(t)}\Big).
\end{align}
Using the elementary inequality of the form $a\le \frac{1}{2}(a^2+1)$ applied to $a = \sqrt{-U_p(t)}$ in (\ref{325}), we have
\begin{align*}
[\log I_p(t)]' & \ge pU_p(t) - p\psi(t)\Big[ 1 + \frac{1}{2} (-U_p(t) +1)\Big]\nonumber\\
&  = p\left( 1+ \frac{\psi(t)}{2} \right) U_p(t) - \frac{3p}{2}\psi(t).
\end{align*}
This completes the proof
\end{proof}

The next result is an immediate consequence of the above theorem.
\begin{theorem}
Let $u: M\times [a,b] \to \mathbb{R}^n$  satisfies (\ref{314}). Then 
\begin{equation}\label{aa}
I_p(b) \geq I_p(a)\exp\left\{\frac{p(b-a)}{2} \Lambda\right\},
\end{equation}
where 
\begin{align*}
\Lambda := [U_p(a)-1]\exp\left\{ [2+\sup_{[a,b]}\psi(t)]\frac{p}{2}\int_a^b \psi^2(t)dt \right\} + 1-3 \sup_{[a,b]}\psi(t).
\end{align*}
In particular, if $u(\cdot,b)=0$, then $u\equiv 0$.
\end{theorem}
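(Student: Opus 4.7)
The plan is to mirror the strategy used in the proof of Corollary \ref{cor2}, but now exploiting the three differential inequalities \eqref{315}--\eqref{317} that were established for the nonlinear $p$-setting in the preceding theorem. The pieces fit together cleanly: \eqref{317} gives a pointwise lower bound on $(\log I_p)'$ in terms of $U_p$ and $\psi$, while \eqref{316} (equivalently \eqref{315} solved as an ODE inequality) converts information at the initial time $a$ into a pointwise lower bound on $U_p(t)$ itself.

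First I would integrate \eqref{317} from $a$ to $b$ to obtain
\begin{equation*}
\log I_p(b) - \log I_p(a) \;\ge\; \int_a^b \Bigl[\,p\bigl(1+\tfrac{\psi(t)}{2}\bigr) U_p(t) \,-\, \tfrac{3p}{2}\psi(t)\,\Bigr]\,dt.
\end{equation*}
Next I would integrate \eqref{316} from $a$ to $t$, exponentiate, and rearrange to get
\begin{equation*}
U_p(t) \;\ge\; 1 + \bigl[U_p(a)-1\bigr]\exp\!\Bigl(\tfrac{p}{2}\!\int_a^t\!\psi^2(s)\,ds\Bigr),
\end{equation*}
noting that $D_p\le 0$ forces $U_p\le 0$, so $U_p(a)-1<0$ and the exponential factor is $\ge 1$. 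Substituting this lower bound into the previous integral, while replacing $\psi(t)$ by $\sup_{[a,b]}\psi$ in both the prefactor $p(1+\psi(t)/2)>0$ (here one uses $U_p(t)\le 0$, which reverses the direction) and in the exponential (using $\int_a^t\psi^2\le\int_a^b\psi^2$ together with the sign of $U_p(a)-1$), and finally applying $\int_a^b\psi(t)\,dt\le(b-a)\sup_{[a,b]}\psi$ to the stray $-(3p/2)\psi(t)$ term, yields a lower bound of the form $\tfrac{p(b-a)}{2}\Lambda$ with $\Lambda$ of the stated shape.

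The main obstacle is bookkeeping the signs: because $U_p(t)\le 0$, every bound by $\sup\psi$ on the factor $p(1+\psi/2)$ must be checked to ensure it produces a \textit{lower} rather than upper bound, and similarly for the exponentiated term involving $U_p(a)-1<0$. Once \eqref{aa} is established, the backward-uniqueness corollary ``$u(\cdot,b)=0\Rightarrow u\equiv 0$'' follows by applying the same inequality on each subinterval $[t,b]$: the resulting inequality $0=I_p(b)\ge I_p(t)\exp(\tfrac{p(b-t)}{2}\Lambda_t)$ together with $I_p(t)\ge 0$ forces $I_p(t)=0$, hence $u(\cdot,t)\equiv 0$ for every $t\in[a,b]$.
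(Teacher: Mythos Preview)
Your proposal is correct and follows essentially the same route as the paper: integrate \eqref{317}, bound $\psi$ by its supremum (using $U_p\le 0$ to keep the inequality in the right direction), integrate \eqref{316} to get the pointwise lower bound on $U_p(t)$, and substitute. Your sign bookkeeping is in fact more explicit than the paper's, and your backward-uniqueness argument via restriction to $[t,b]$ is the standard one (the paper merely asserts the conclusion without spelling it out).
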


\begin{proof}
Starting with (\ref{317}) we get
\begin{align}\label{6a}
\log I_p(b)-\log I_p(a) &= \int_a^b (\log I)'(s) ds\nonumber\\
& \geq [2+\sup_{[a,b]}\psi(t)] \frac{p}{2} \int_a^bU_p(s)ds - \frac{3p}{2} \sup_{[a,b]}\psi(t) (b-2).
\end{align}
From (\ref{316}) we obtain for $r\in [a,b]$
\begin{align*}
\log [1-U_p(r)]\le \log [1-U_p(a)] + \frac{p}{2} \int_a^b \psi^2(r)dr.
\end{align*}
Thus
\begin{align}\label{6b}
U_p(r)\geq \exp \left( \frac{p}{2} \int_a^b \psi^2(t)dt \right) [U_p(a)-1] +1.
\end{align}
Inserting the estimate (\ref{6b}) into  (\ref{6a}) and integrating the resulting expression gives
\begin{align*}
\log I_p(b) - \log I_p(a) \ge  & \  \frac{p(b-a)}{2} \Bigg[[U_p(a)-1]\exp\left\{ [2+\sup_{[a,b]}\psi(t)]\frac{p}{2}\int_a^b \psi^2(t)dt \right\} \\
& + 1-3 \sup_{[a,b]}\psi(t) \Bigg]
\end{align*}
from where the estimate (\ref{aa}) follows. This completes the proof.
\end{proof}


This work  does not receive any funding.

 \subsection*{Conflict of interests}
 We declare that we do not have any commercial or associative interest that represents
a conflict of interest in connection with the work submitted.






\begin{thebibliography}{99}

 
\bibitem{Ab} A. Abolarinwa and S.  Azami,  Parabolic frequency monotonicity on the conformal Ricci flow,  Submitted (2023).

\bibitem{A} F. J. Almgren, Dirichlet's problem for multiple valued functions and the regularity of mass minimizing integral currents. Minimal submanifolds and geodesics, 1-6, North Holland, Amsterdam, 1976.

 
 \bibitem{Az} S.  Azami and A. Abolarinwa, Parabolic Frequency on Ricci-Bourguignon flow  and Yamabe flow, Submitted (2023).
 
 
\bibitem{TH1} T. H. Colding and W. P. Minicozzi II, Harmonic functions with polynomial growth, J. Diff. Geom., 46(1), (1997), 1-77.

\bibitem{TH} T. H. Colding and W. P. Minicozzi II, Parabolic frequency on manifolds, International Mathematics Research Notice, 2022 (15) (2022), 11878-11890.

\bibitem{G1} N. Garofalo and F. H. Lin, Monotonicity properties of variational integrals, $A_p$ weights and unique continuation, Indiana Univ. Math. J., 35(2) (1986), 245-268.

\bibitem{G2} N. Garofalo and F. H. Lin, Unique continuation for elliptic operators: a geometric-variational approach, Comm. Pure Appl. Math., 40(3) (1987), 347-366.

\bibitem{GO} J. N. V. Gomes and J. F. R. Miranda, Eigenvalue  estimates for  a class of elliptic differential operator in divergence form, Nonlinear Anal., 176(2018), 1-19.

\bibitem{JB} J. Baldauf and D. Kim , Parabolic frequency on Ricci flows,  International Mathematics Research Notice, 2022; rnac128,https://doi.org/10.1093/imrn/rnac128.

\bibitem{BK} B. Kotschwar and L. Ni, Local gradient estimates of p-harmonic functions, $1/H$-flow, and an entropy formula, Ann. Sci. Ec. Norm. Super., 42(4)(2009), 1-36.
%
\bibitem{XL} X. Li and K.  Wang,  Parabolic frequency monotonicity on compact manifolds. Calc. Var.
Partial Differential Equations 58 (2019), no. 6, Paper No. 189, 18 pp.
%
\bibitem{LLX} C. Li, Y. Li and K. Xu,  Parabolic frequency monotonicity on Ricci-flow and Ricci-harmonic flow with bounded curvature.  arxiv.org/abs/2205.07702v1

\bibitem{LZ} X. Li and Q. Zhang, Matrix Li-Yau-Hamilton estimates under Ricci flow and parabolic frequency,  arXiv:2306.10143v1 (2023)

%
\bibitem{L} F. H. Lin, Nodal sets of solutions of elliptic and parabolic equations,  Comm. Pure Appl. Math.,  44(3) (1991), 287-308.
%
\bibitem{MA} F. Manfio, J. Roth and A. Upadhyay, Extrinsic eigenvalues upper bounds for submanifoolds in weighted manifolds, Annals of global analysis and geometry, 62(2022), 489-505.

\bibitem{N} L. Ni, Parabolic frequency monotonicity and a theorem of Hardy-P\'{o}lya-Szeg\"{o}. Analysis,
complex geometry, and mathematical physics: in honor of Duong H. Phong, 203-210, Contemp. Math., 644, Amer. Math. Soc., Providence, RI, 2015.

\bibitem{P} C. C. Poon, Unique continuation for parabolic equations,  Comm. Partial Differential Equations, 21 (1996), no. 3-4, 521-539.

\bibitem{FW} L. F. Wang, Gradient estimates for the p-Laplace heat equation under the Ricci flow, Adv. Geom., 13(2013), 349-368.

\bibitem{XX} X. Xu, W. Shen and L. F. Wang, Geometric functionals for the p-Laplace operator on the graph, J. Functional Analysis, 284(2023), 109878.

\bibitem{XZ} X. Zhang, A Harnack inequality for p-Laplace heat equation on Riemannian manifolds, Acta. Math. Sinica, 43(2000), 895-906.
\end{thebibliography}
\end{document}